\documentclass[11pt,a4paper,twoside,reqno]{amsart}
%
%

\usepackage{graphicx,calc,amscd}
\usepackage{epsfig,psfig,curves}
\usepackage{float}
\usepackage{labelfig}
\usepackage{amsfonts}
\usepackage{amssymb}
\usepackage{fontenc}
\usepackage{amsmath}

\flushbottom
%
%
\newcommand{\fnote}[1]{\footnote{\small sharp1}}

\newcommand{\Z}{{\mathbb Z}}
\newcommand{\R}{{\mathbb R}}

\newcommand{\C}{{\mathbb C}}
\newcommand{\T}{{\mathbb T}}

\newcommand{\Dias}{\mathfrak{D}}

\newcommand{\M}{\mathcal{M}}

\newcommand{\Op}{\mathcal{O}}

\newcommand{\area}{\text{area}}
\newcommand{\dias}{\text{dias}}

\newcommand{\sys}{{\rm sys}}

\numberwithin{equation}{section}

\newtheorem{theorem}{Theorem}[section]
\newtheorem{proposition}[theorem]{Proposition}

\newtheorem{lemma}[theorem]{Lemma}

\theoremstyle{definition}

\def\d{\text{dias}\,}
\def\A{\text{area}}

\title[Local optimal diastolic inequality]{A local optimal diastolic inequality on the two-sphere}

\author[F.~Balacheff]{Florent Balacheff}

\address{Florent Balacheff - Laboratoire Paul Painlev\'e, Bt. M2, Universit\'e des Sciences et Technologies de Lille, 59655 Villeneuve d'Ascq, France}
\email{Florent.Balacheff@math.univ-lille1.fr}
\subjclass[2000]{53C23}
\keywords{Conical singularity, diastole, sphere, systole}
\thanks{Funded by grant 20-118014/1 of the Swiss SNF}

\begin{document}
\maketitle

\begin{abstract}
Using a ramified cover of the two-sphere by the torus, we prove a local
optimal inequality between the diastole and the area on the two-sphere near a
singular metric. This singular metric, made of two equilateral triangles glued
along their boundary , has been conjectured by E. Calabi to achieve the best
ratio area over the square of the length of a shortest closed geodesic. Our diastolic inequality asserts that this conjecture is to some extent locally true. 
 
\end{abstract}

\section{Introduction}

Let $g$ be a smooth Riemannian metric on the two-sphere $S^2$. Following I. Babenko \cite{Bab97} we call {\it general systole}, or simply {\it systole} in this context, the least length of a non trivial closed geodesic and denote it by $\sys_0$. For every Riemannian metric $g$ on $S^2$,
\begin{equation}\label{eq:sys}
\A(S^2,g)\geq {1 \over 32} \sys_0(S^2,g)^2.
\end{equation}
Such an inequality was first proved by C. Croke (see \cite{Cr88}), and later improved in \cite{NR02}, \cite{Sa04} and \cite{Ro05}.
A natural question is to ask for the best constant in inequality (\ref{eq:sys}). It amounts to find the global infimum of the {\it systolic area} defined as the ratio $\A/\sys_0^2$ among smooth metrics. A flat metric with three conical singularities was conjectured by E. Calabi (see \cite{Cr88}) to be the global minimum. This metric, denoted by $g_c$, is made of two equilateral triangles of size $1$ glued
along their boundary. Its systolic area is equal to ${1\over 2\sqrt{3}}$.\\

We may wonder why such a metric has any chance to achieve the minimal systolic area.  In order to answer and then give (if needed) strength to Calabi's systolic conjecture we ask the following question : {\it Is such a conjecture locally true} ? \\

Observe that the systolic area is not a continuous function of the metric, even when the space of smooth metrics is topologized by the strong topology (for a definition of strong topology, see \cite[p.~34]{Hi76}). In order to prove local results we introduce a quantity that coincides with the systole at $g_c$ and that varies continuously with the metric $g$. This quantity is the {\it diastole} (see \cite{BS07}). The diastole of a Riemannian closed surface is defined as the value obtained by a minimax process over the space of one-cycles, see section \ref{sec:dias}. It is denoted by $\dias$. In the case of a smooth metric the diastole is realized as the length of an union of disjoint closed geodesics (see \cite[p.~468]{pit74}). Observe that the notion of diastole holds for any Riemannian closed surface with a finite number of conical singularities.
It is proved in \cite{BS07} that for every Riemannian closed  surface $(M,g)$ of genus $k$ the diastole bounds from below the area: 
$$
\area(M,g) \geq \frac{C}{k+1} \, \dias(M,g)^2. 
$$
Here $C$ is a positive constant. In the case of a two-sphere and since \linebreak $\sys_0\leq \dias$ for smooth metrics we thus (re)obtain inequality (\ref{eq:sys}) albeit with a worse constant. We call the ratio $\area/\dias^2$ the {\it diastolic area} and observe that it is continuous on the space of smooth metrics  with a finite number of conical singularities for the uniform topology on metric spaces (see \cite[chapter 7]{AZ67} for a definition of uniform topology). The infimum of the diastolic area  is not known for any surface, and we do not know if in the case of the two-sphere it is realized by $g_c$ and so equals $1/ (2\sqrt{3})$. It is proved in \cite{Ba06} that the round metric $g_0$ is a critical point of the diastolic area over the space of smooth metrics conformal to $g_0$ topologized by the strong topology. As 
$$
{\A(S^2,g_0)\over \dias(S^2,g_0)^2}={1\over \pi},
$$
the round metric is not the global infimum. So its criticity is very
interesting and suggests that it may be a local minimum. \\


In this paper we prove that the metric $g_c$ is a local minimum of the diastolic area over the space of singular metrics $C^1$-conformal to $g_c$ with respect to the $C^1$-topology. More precisely, let
$$
\M_{g_c}:=\left\{e^{2u}\cdot g_c \mid u \in
C^1(S^2,\R)\right\}
$$
be the space of metrics of class $C^1$ with three conical singularities of angle $2\pi/3$ on $S^2$ conformal to $g_c$. The space $\M_{g_c}$ is naturally in bijection with $C^1(S^2,\R^\ast_+)$ and we will call  {\it $C^1$-topology} on $\M_{g_c}$ the topology induced by the $C^1$ compact-open topology on $C^1(S^2,\R^\ast_+)$ (see \cite[p.~34]{Hi76}).

The space $\M_{g_c}$ is adapted to the local study of the diastolic area near the metric $g_c$. In fact we will observe in subsection (\ref{unif}) that, thanks to the Riemann uniformization theorem,  any smooth  metric can be written (up to an isometry) as  a metric which differs from $g_c$ by a conformal factor which lies in $C^1(S^2,\R_+)$. Especially  finding the infimum of the diastolic area over $\M_{g_c}$ amounts to find the infimum of the diastolic area over the space of smooth metrics, see proposition (\ref{prop}).

\begin{theorem}\label{th:dias-sing}
There exists an open neighborhood $\Op$ of $g_c$ in $\M_{g_c}$ with respect to the $C^1$-topology such that for all $g \in \Op$,
$$
 \A(S^2,g) \geq {1\over 2\sqrt{3}} \, \d (S^2,g)^2
$$
with equality if and only if $g=g_c$.
\end{theorem}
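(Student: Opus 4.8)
The plan is to desingularize $g_c$ by passing to the equilateral torus. The conformal structure underlying $g_c$ carries a holomorphic branched cover $p\co T^2\to S^2$ of degree $3$, totally ramified over the three singular points, with deck group generated by an order-three conformal automorphism $\sigma$; Riemann--Hurwitz is met exactly ($0=3\cdot 2-3\cdot 2$), and near each cone point the local model $w\mapsto w^3$ multiplies the angle $2\pi/3$ by $3$, so that $p^{*}g_c$ extends to a smooth flat metric on $T^2$, namely the hexagonal (equilateral) torus. Given $g=e^{2u}g_c\in\M_{g_c}$ I would pull it back to $\tilde g:=p^{*}g=e^{2(u\circ p)}\,p^{*}g_c$. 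Since $u\in C^1$ and $p$ is locally $w\mapsto w^3$ at the ramification points, the factor $u\circ p$ is again $C^1$, so $\tilde g$ is a genuine $C^1$ Riemannian metric on $T^2$; it is $\sigma$-invariant by construction, and as $p$ is a three-to-one local isometry off a null set,
\[
\A(T^2,\tilde g)=3\,\A(S^2,g).
\]

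On the torus I would invoke Loewner's systolic inequality, valid for \emph{every} metric on $T^2$: $\A(T^2,\tilde g)\ge \tfrac{\sqrt3}{2}\,\sys(T^2,\tilde g)^2$, with equality if and only if $\tilde g$ is flat and equilateral. Combined with the area identity this yields
\[
\A(S^2,g)=\tfrac13\,\A(T^2,\tilde g)\ \ge\ \tfrac{1}{2\sqrt3}\,\sys(T^2,\tilde g)^2 ,
\]
so the theorem reduces to the single comparison
\[
\dias(S^2,g)\ \le\ \sys(T^2,\tilde g),
\]
which is precisely where the local hypothesis must be used. Note that at $g_c$ both steps are equalities: the cover produces the equilateral torus (equality in Loewner) and $\dias(S^2,g_c)=\sqrt3=\sys(T^2,\tilde g_c)$, which both anchors the constant $1/(2\sqrt3)$ and shows the method is sharp.

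The comparison is an upper bound on a minimax, so it must be obtained by exhibiting a sweepout. The idea is to descend, through $p$, the one-parameter family of closed geodesics of $(T^2,\tilde g)$ in a fixed systolic direction, each of length $\sys(T^2,\tilde g)$, and to reorganize the projected cycles into an admissible degree-one family on $(S^2,g)$ whose widest slice has length at most $\sys(T^2,\tilde g)$. This is the main obstacle, and it is genuinely delicate for two reasons. First, the projection of a torus systolic geodesic is \emph{not} embedded: it meets its $\sigma$-translates, and indeed $(S^2,g_c)$ carries no simple closed geodesic at all, since each cone point has angular defect $4\pi/3$ and Gauss--Bonnet then forbids any geodesic disk; so the extremal slice is a geodesic net rather than an embedded loop, and a single systolic direction descends with multiplicity rather than degree one. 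Second, one must certify that the reorganized family really detects the generator of the space of one-cycles while keeping every slice no longer than the upstairs systole. It is here that closeness to $g_c$ is essential: for $\tilde g$ near the flat equilateral metric the systolic geodesics form a stable foliation, so the descent can be controlled uniformly and the sweepout built on a $C^1$-neighbourhood $\Op$ of $g_c$.

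Finally, for the equality case I would trace the chain backwards. Equality in the theorem forces equality in Loewner, so $\tilde g$ is flat and equilateral; being conformal to the flat $p^{*}g_c$ and itself flat, its conformal factor is harmonic on the closed torus, hence constant, so $u$ is constant and $g$ is homothetic to $g_c$. Since $\A/\dias^2$ is scale-invariant, this homothety class is exactly the equality locus, which is $g_c$ up to rescaling.
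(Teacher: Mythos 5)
Your setup (the degree-3 ramified cover, smoothness of the pulled-back metric, the area identity $\area(\T^2,\tilde g)=3\,\area(S^2,g)$, and the anchoring of the constant $1/(2\sqrt3)$ at $g_c$) matches the paper's, and your treatment of the equality case is sound. But the proof has a genuine gap exactly where you locate "the main obstacle": the comparison $\dias(S^2,g)\le \sys_\pi(\T^2,\tilde g)$ is never proved, only asserted, and the mechanism you invoke to justify it does not exist. For a generic $C^1$-perturbation $\tilde g$ of the flat hexagonal metric, the shortest closed geodesics in a fixed primitive homotopy class do \emph{not} foliate the torus (typically there are only finitely many of them), so there is no "stable foliation of systolic geodesics" to descend. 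Worse, your comparison is strictly stronger than what is needed: $\sys_\pi(\T^2,\tilde g)$ is an infimum over \emph{all} noncontractible curves and can be strictly smaller than the shortest length occurring in any fixed sweepout direction, so a sweepout of $S^2$ whose widest slice is bounded by $\sys_\pi(\T^2,\tilde g)$ would have to be based at the projection of the actual systolic geodesic and then contracted without ever exceeding that minimal length --- and since the systole is a minimum, nearby parallel slices tend to be \emph{longer}, so this bound is not free. Nothing in your write-up controls this.

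The paper sidesteps both difficulties by never using Loewner's inequality or the systole of $\tilde g$ at all. It works with the \emph{fixed} one-parameter family $\gamma_s=f(\cdot,s)$ of projections of the straight lines of the flat structure, whose $g$-lengths are then measured in the perturbed metric. A Cauchy--Schwarz averaging of $l_g(\gamma_s)$ over $s$ gives directly
$$
\left(\inf_{s} l_g(\gamma_s)\right)^2 \le 2\sqrt3\cdot\area(S^2,g),
$$
which replaces Loewner, and the diastole is then bounded by $\inf_s l_g(\gamma_s)$ (a quantity $\ge \sys_\pi(\T^2,\tilde g)$, hence an easier target than yours) via an explicit sweepout $\{z_s^\alpha\}$ through $\gamma_s$ whose slices have $g_c$-length $1-2|\alpha-1/2|$; the slack $2|\alpha-1/2|$ is what absorbs the perturbation, and the inequality $l_g(z_s^\alpha)\le l_g(\gamma_s)$ is established by a Stokes-type estimate on the lifted domains between $\tilde\gamma_s$ and $\tilde z_s^\alpha$, which is precisely where the $C^1$-smallness of $u$ enters quantitatively and where the neighborhood $\Op$ is defined. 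To repair your argument you would either need to carry out an analogous quantitative contraction based at the true systolic geodesic of $\tilde g$, or retreat to the weaker (and sufficient) comparison with $\inf_s l_g(\gamma_s)$ and replace Loewner by the averaging step --- at which point you have reproduced the paper's proof.
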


The proof is based on the study of a degree $3$ ramified cover of $S^2$ by the two-torus $\T^2$. 

\section{Preliminaries and definitions}

\subsection{Isodiastolic inequality for closed surfaces}\label{sec:dias}

 Recall from \cite{BS07} that the {\it diastole} of a closed Riemannian surface $(M,g)$ of class $C^1$ with a finite number of conical singularities is defined by a
minimax process over the space of one-cycles and is denoted by $\d(M,g)$. 

More precisely, we denote by ~$\mathcal{Z}_{1}(M;\Z_\ast)$ the space of one-cycles of $M$, see \cite{BS07} for a precise definition. Here $\Z_\ast$ denotes $\Z$ in the orientable case, and $\Z_2$ in the non orientable one. An isomorphism due to F.~Almgren~\cite{almgren} between $\pi_{1}(\mathcal{Z}_{1}(M;\Z_\ast),\{0\})$ and $H_{2}(M;\Z_\ast)\simeq \Z_\ast$ permits us to define the diastole over the one-cycle space as
$$
\d(M,g) := \inf_{(z_{t})} \sup_{0 \leq t \leq 1} Mass(z_t)
$$
where $(z_{t})$ runs over the families of one-cycles inducing a generator of \linebreak  $\pi_{1}(\mathcal{Z}_{1}(M;\Z_\ast),\{0\})$ and $Mass(z_t)$ represents the mass (or length) of~$z_t$. 
From a result of J.~Pitts \cite[p.~468]{pit74} (see also~\cite{CC92}), this minimax principle gives rise to a union of closed geodesics (counted with multiplicity) of total length~$\dias(M)$ when the metric $g$ is smooth without conical singularities. 

Recall the following result (see \cite{BS07}).
\begin{theorem} \label{theo:dias}
There exists a constant $C$
such that every closed surface~$M$ of genus~$k$ endowed with a  Riemannian smooth metric $g$ satisfies
\begin{equation} \label{eq:A}
\area(M,g) \geq \frac{C}{k+1} \, \dias(M,g)^2. 
\end{equation}
The constant $C$ can be taken equal to~$10^{-16}$.
\end{theorem}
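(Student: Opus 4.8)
The plan is to bound the diastole from above by constructing one explicit sweepout of $M$ whose slices are all short, and then to compare its maximal slice length with $\sqrt{(k+1)\,\area(M,g)}$. Since $\dias(M,g)$ is, by definition, the infimum over all admissible families $(z_t)$ generating a generator of $\pi_1(\mathcal{Z}_1(M;\Z_\ast),\{0\})$ of $\sup_t Mass(z_t)$, it suffices to exhibit one such family with $\sup_t \length(z_t) \leq K\sqrt{(k+1)\,\area(M,g)}$ for a universal constant $K$; the resulting inequality $\dias(M,g)^2 \leq K^2(k+1)\,\area(M,g)$ is exactly \eqref{eq:A} with $C=1/K^2$.

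First I would decompose $M$ into pieces of simple topology. Cutting $M$ along a suitable finite family of disjoint simple closed curves reduces it to a collection of planar pieces (spheres with boundary) whose number, together with the number of cutting curves, is controlled by the Euler characteristic, hence is $O(k+1)$. The delicate point is to carry out this decomposition using only \emph{short} curves: I would iterate a shortening step in which, as long as a piece still carries topology, one produces a noncontractible or boundary-separating simple closed curve whose length is at most a universal multiple of the square root of that piece's area, a Besicovitch/systolic-type estimate. This keeps the total length of the cutting system comparable to $\sqrt{\area(M,g)}$ while the number of curves stays linear in $k$.

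Next I would sweep out each planar piece. On a region $R$ of area $a$ I would use iterated bisection: a two-dimensional isoperimetric argument splits $R$ into two subregions of area at most $a/2$ by a separating arc of length $O(\sqrt{a})$, and recursing on the subregions yields a sweepout of $R$ by one-cycles all of length $O(\sqrt{a})$, interpolating between the empty cycle and $\partial R$. Concatenating these local sweepouts across the pieces, inserting the (short) cutting curves at the gluing stages, produces a global family of one-cycles in $\mathcal{Z}_1(M;\Z_\ast)$.

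Two tasks then remain: topological bookkeeping and quantitative accounting. On the topological side I must check that the concatenated family is closed and represents a generator of $\pi_1(\mathcal{Z}_1(M;\Z_\ast),\{0\}) \simeq H_2(M;\Z_\ast)$ under Almgren's isomorphism, which is where one uses that the pieces reassemble $M$ with the correct fundamental class. On the quantitative side, each slice meets only a bounded number of pieces, so its length is $O(\sqrt{a_{\max}})$ plus the cutting-curve contribution; summing the per-piece area budgets over the $O(k+1)$ pieces and applying Cauchy--Schwarz produces the factor $(k+1)$. I expect the main obstacle to be the first step: producing a short cutting system while \emph{simultaneously} controlling its total length and its cardinality, since naive shortening can trade length for an uncontrolled number of curves. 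Tracking every constant through the three steps is precisely what forces $C$ down to the explicit value $10^{-16}$.
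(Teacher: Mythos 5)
First, a point of order: the paper does not actually prove Theorem~\ref{theo:dias} --- it is \emph{recalled} from \cite{BS07} (``Recall the following result''), and the only argument supplied here is the remark that the inequality persists for $C^1$ metrics with conical singularities by approximation and continuity of area and diastole. So there is no in-paper proof to compare yours against; the relevant benchmark is the proof in \cite{BS07}, whose broad architecture (decomposing the surface along short curves into topologically simple pieces, sweeping out each piece, and reassembling the sweepouts) your outline does resemble in spirit.

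As a proof, however, your proposal has genuine gaps, and they sit exactly where you admit the difficulty lies. (a) The existence of a noncontractible or separating simple closed curve of length $O(\sqrt{\area})$ in any piece ``carrying topology'' is itself a systolic-type theorem; for the surfaces-with-boundary produced by your iteration you need a relative version, and the iteration must \emph{simultaneously} control the number of cuts, their total length, and the areas of the resulting pieces --- you name this obstacle but offer no mechanism that resolves it. (b) The bisection step, splitting a planar region of area $a$ into subregions of area at most $a/2$ by an arc of length $O(\sqrt{a})$, is not an elementary isoperimetric fact: subdividing Riemannian disks and spheres by short curves into pieces of controlled area is a nontrivial result (and false in naive formulations without additional control on the boundary lengths of the pieces), and in the genus-zero case it is essentially equivalent to the inequality being proved, so invoking it as a black box is close to circular. (c) The claims that each slice meets only boundedly many pieces and that a Cauchy--Schwarz step ``produces the factor $(k+1)$'' are unsubstantiated; since the paper notes that the $1/(k+1)$ dependence is asymptotically sharp, this factor must come from an identifiable mechanism (in \cite{BS07}, an induction on the genus), not from bookkeeping. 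Finally, the admissibility of the concatenated family --- continuity in $\mathcal{Z}_1(M;\Z_\ast)$ and the verification that it generates $\pi_1(\mathcal{Z}_1(M;\Z_\ast),\{0\})$ under Almgren's isomorphism --- is asserted rather than checked. As written this is a plausible roadmap rather than a proof, and it cannot deliver the explicit constant $10^{-16}$.
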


The best constant involved in inequality (\ref{eq:A}) is not known for any closed surface. Nevertheless the asymptotic behaviour of the above constant for large genus can not be improved, see \cite{BS07}. The inequality (\ref{eq:A}) remains valid for Riemannian metric of class $C^1$  with a finite number of conical singularities, as these metrics can be obtained as uniform limit (as metric spaces) of Riemannian smooth metrics and both the area and the diastole are continuous for this topology (see \cite[p.224 \& 269]{AZ67}).

\subsection{The singular metric and the Riemann uniformization theorem}\label{unif}

By the Riemann uniformization theorem there exists only one
conformal structure on $S^2$ (up to diffeomorphism). So we set
$S^2=\C \cup \{\infty\}$. Following \cite{Tro86} (see also \cite{Re01}), $g_c$ can be
written as
$$
\left(|z-a_1|\cdot |z-a_2| \cdot |z-a_3|\right)^{-4/3}|dz|^2
$$
up to a scale transformation where $(a_1,a_2,a_3)$ is any triple of
pairewise distinct points of $S^2$. We fix in the sequel the triple
to be $(-1,0,1)$.

 Let
$$
\M_{g_c}:=\left\{e^{2u}\cdot g_c \mid u \in
C^1(S^2,\R)\right\}
$$
be the space of metrics of class $C^1$ with three conical singularities of angle $2\pi/3$ on $S^2$ conformal to $g_c$.

\begin{proposition}\label{prop}
The infimum of the diastolic area over $\M_{g_c}$ equals the infimum of the diastolic area over the space of smooth metrics.
\end{proposition}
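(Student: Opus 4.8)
The plan is to view both the smooth metrics and the metrics of $\M_{g_c}$ as members of the single family $\{\phi\cdot g_c \mid \phi\in C^1(S^2,\R_+)\}$, and to deduce the equality of infima by approximating each of the two families by the other in the uniform topology on metric spaces, for which the diastolic area is continuous (subsection \ref{sec:dias}). First I would record the two descriptions underlying subsection \ref{unif}. Writing the round metric as $\mu\cdot g_c$ with $\mu=\frac{4}{(1+|z|^2)^2}\left(|z+1|\,|z|\,|z-1|\right)^{4/3}$, one checks that $\mu\in C^1(S^2,\R_+)$ and that $\mu$ vanishes like $|z-a_i|^{4/3}$ at $a_1=-1$, $a_2=0$, $a_3=1$, exactly the rate that cancels the conical singularities of $g_c$. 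Hence, by the Riemann uniformization theorem, every smooth metric is isometric to some $\phi\cdot g_c$ with $\phi\in C^1(S^2,\R_+)$ vanishing at this rate at the three points, whereas $\M_{g_c}$ consists of the $\phi\cdot g_c$ with $\phi\in C^1(S^2,\R^\ast_+)$. Since the diastolic area is invariant under diffeomorphism and scaling, the two quantities in the statement are the infima of $\A(g)/\d(g)^2$ over these two subfamilies.

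To establish $\inf_{\M_{g_c}}\le\inf_{\mathrm{smooth}}$, I would approximate an arbitrary smooth metric $g=\phi\cdot g_c$ from within $\M_{g_c}$ by $g_n:=(\phi+\tfrac1n)\,g_c$. Each $g_n$ lies in $\M_{g_c}$, since $\phi+\tfrac1n\in C^1(S^2,\R^\ast_+)$, and $g_n\to g$ uniformly as metric spaces: the areas satisfy $\A(g_n)=\A(g)+\tfrac1n\A(g_c)\to\A(g)$ because $g_c$ has finite area, while from $g\le g_n$ and $\sqrt{g+\tfrac1n g_c}\le\sqrt g+\tfrac1{\sqrt n}\sqrt{g_c}$ one obtains $d_g\le d_{g_n}\le d_g+\tfrac{L}{\sqrt n}$, with $L$ a bound for the $g_c$-length of $g$-minimizing geodesics (finite because $g_c$ is an integrable cone metric of finite diameter). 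Continuity of the diastole then gives $\d(g_n)\to\d(g)$, so $\inf_{h\in\M_{g_c}}\A(h)/\d(h)^2\le\A(g)/\d(g)^2$; taking the infimum over smooth $g$ yields the inequality.

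For the reverse inequality I would smooth the three cone points of a metric $g=\phi\cdot g_c\in\M_{g_c}$ by a modification localized on small disks $D_i(\eps)$ around the $a_i$: replace $\phi$ by a factor $\phi_\eps\in C^1(S^2,\R_+)$ equal to $\phi$ off $\bigcup_i D_i(\eps)$ and decaying like $|z-a_i|^{4/3}$ near each $a_i$, so that $g_\eps:=\phi_\eps\cdot g_c$ is a genuine smooth metric. As $g_\eps=g$ outside the disks and $g_\eps\le g$ inside them, one again gets $g_\eps\to g$ uniformly as $\eps\to0$, because both the $g$-area and the $g$-diameter of each $D_i(\eps)$ tend to $0$ (the cone of angle $2\pi/3$ at $a_i$ has finite size shrinking with $\eps$). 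Continuity of the diastolic area then gives $\inf_{h\;\mathrm{smooth}}\A(h)/\d(h)^2\le\A(g)/\d(g)^2$ for every $g\in\M_{g_c}$, and combined with the previous step this forces the two infima to coincide.

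The only delicate point is the control of distances and areas near the three conical points, where one of the two conformal factors blows up while the other degenerates. The whole argument reduces to checking that $\tfrac1n g_c$ in the first step, and the localized cap in the second, contribute a uniformly small amount to lengths and to area; this rests precisely on the integrability of the cone metric $g_c$ and on the finiteness of the diameter of a cone of angle $2\pi/3$. Once this is granted, everything else follows from the continuity of area and diastole in the uniform topology recalled in subsection \ref{sec:dias}.
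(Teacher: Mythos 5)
Your argument is essentially the paper's own proof: both use the Riemann uniformization theorem to write every smooth metric as $\phi\cdot g_c$ with $\phi\in C^1(S^2,\R_+)$ vanishing at the three cone points, and then deduce the equality of the two infima by approximating each family by the other and invoking the continuity of the diastolic area for the uniform topology on metric spaces. Your explicit approximating sequences ($(\phi+\tfrac1n)\,g_c$ in one direction, smoothing of the cone caps in the other) merely spell out details that the paper leaves implicit, so the proposal is correct and follows the same approach.
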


\begin{proof} By the Riemann uniformization theorem, every smooth Riemannian metric $g$ is isometric to a metric of the type $e^{2v} \cdot g_0$ where $g_0$ denotes the round metric and $v \in C^\infty(S^2,\R)$. As 
$$
g_0 = \left(\frac{2}{1+|z|^2}\right)^2 |dz|^2,
$$
we see that $g$ is homothetic (up to diffeomorphism) to the metric 
$$
e^{2v} \cdot \left(\frac{2}{1+|z|^2}\right)^2 \cdot \left(|z+1|\cdot |z| \cdot |z-1|\right)^{4/3} \cdot g_c
$$
which differs from $g_c$ by a conformal factor which lies in $C^1(S^2,\R_+)$. So we can approximate the smooth Riemannian metric $g$ by elements of $\M_{g_c}$ with respect to the $C^1$-topology. As the diastolic area is continuous with respect to this topology (the diastolic area is in fact continuous for the uniform topology on metric spaces), we get that the infimum of the diastolic area over $\M_{g_c}$ is less than the infimum of the diastolic area over the space of smooth metrics. The reverse inequality is obtained by approximating elements of $\M_{g_c}$ by Riemannian smooth metrics for the uniform topology on metric spaces.
\end{proof}

\subsection{The ramified cover and the isosystolic inequality on the torus}

Let 
$$
f : \T^2 \rightarrow S^2
$$
be the covering map of degree $3$ ramified over the three points
$\{-1,0,1\}$. If $g$ is in $\M_{g_c}$, we denote by $\tilde{g}$ the pull-back metric of $g$ by
the map $f$. The metric $\tilde{g}$ is $C^1$ everywhere. In fact,
near the ramified points, the map looks like the map $z \mapsto
z^3$. So each conical singularity of angle $2\pi/3$ lift to a conical singularity of angle $2\pi$.
Observe that
$$
\area(\T^2,\tilde{g})=3\cdot \area(S^2,g).
$$
The flat metric $\tilde{g}_c$ given by the hexagonal lattice and appearing as the pullback metric $f^\ast g_c$ is very special. First recall that for a closed Riemannian surface the {\it homotopy systole} is defined as the least length of a non-contractible closed curve and is denoted by $\sys_\pi$. Loewner's theorem (unpublished, see \cite{Pu52}) states the first known isosystolic inequality:

\begin{theorem}
For all metric $g$ of class $C^1$ on $\T^2$,
$$
\area(\T^2,g) \geq \frac{\sqrt{3}}{2} \, \sys_\pi(\T^2,g)^2 ,
$$
with equality if and only if $(\T^2,g)$ is homothetic to the flat
torus corresponding to the hexagonal lattice $(\T^2,\tilde{g}_c)$.
\end{theorem}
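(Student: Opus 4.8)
The plan is to reduce everything to the flat case via conformal uniformization and then exploit a sweep-out of the torus by parallel non-contractible curves, followed by a single application of Cauchy--Schwarz. By the uniformization theorem for tori, $(\T^2,g)$ is conformally equivalent to a flat torus $\C/\Lambda$, so after composing with a diffeomorphism I may write $g=\rho^2\,|dz|^2$ where $z$ is the complex coordinate on $\C/\Lambda$ and $\rho\ge 0$ is the (continuous) conformal factor. Composing with a $\C$-linear automorphism (rotation and scaling of $\C$), which alters neither the area nor the systole of $g$, I may assume $\Lambda=\Z+\tau\Z$ with $\tau$ in the standard fundamental domain of the modular group, i.e. $|\mathrm{Re}\,\tau|\le 1/2$, $|\tau|\ge 1$, $\mathrm{Im}\,\tau>0$. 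A short check on $|m+n\tau|^2$ shows that under this normalization $1$ is a shortest vector of $\Lambda$, and the geometry of the fundamental domain gives $\mathrm{Im}\,\tau\ge \sqrt3/2$, with equality only at the hexagonal point $\tau=e^{i\pi/3}$.

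First I would build a one-parameter family of test curves. For $s\in[0,1]$ let $\gamma_s$ be the projection to $\C/\Lambda$ of the segment $t\mapsto t+s\tau$, $t\in[0,1]$. Each $\gamma_s$ is freely homotopic to the generator represented by $1$, hence non-contractible, so its $g$-length is bounded below by the homotopy systole:
$$\mathrm{length}_g(\gamma_s)=\int_0^1 \rho(t+s\tau)\,dt\ \ge\ \sys_\pi(\T^2,g).$$
Integrating over $s\in[0,1]$ and changing variables by $z=t+s\tau$, whose Jacobian equals $\mathrm{Im}\,\tau$, turns the double integral into a flat-area integral over the fundamental parallelogram:
$$\int_{\T^2}\rho\,dA_0\ =\ \mathrm{Im}\,\tau\int_0^1\!\!\int_0^1\rho(t+s\tau)\,dt\,ds\ \ge\ \mathrm{Im}\,\tau\cdot\sys_\pi(\T^2,g),$$
where $dA_0=dx\,dy$ is the Euclidean area element and $\int_{\T^2}dA_0=\mathrm{Im}\,\tau$.

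Next I would apply the Cauchy--Schwarz inequality to the left-hand side, using $\area(\T^2,g)=\int_{\T^2}\rho^2\,dA_0$:
$$\int_{\T^2}\rho\,dA_0\ \le\ \Bigl(\int_{\T^2}\rho^2\,dA_0\Bigr)^{1/2}\Bigl(\int_{\T^2}dA_0\Bigr)^{1/2}=\sqrt{\area(\T^2,g)}\,\sqrt{\mathrm{Im}\,\tau}.$$
Combining the two displays and squaring yields $\area(\T^2,g)\ge \mathrm{Im}\,\tau\cdot\sys_\pi(\T^2,g)^2$, and the bound $\mathrm{Im}\,\tau\ge\sqrt3/2$ gives the stated inequality.

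Finally, for the equality discussion I would trace back each estimate. Equality forces, on one hand, equality in Cauchy--Schwarz, hence $\rho$ constant and $g$ flat; and, on the other hand, $\mathrm{Im}\,\tau=\sqrt3/2$, which pins $\tau$ to the hexagonal point and $\Lambda$ to the hexagonal lattice, so that $(\T^2,g)$ is homothetic to $(\T^2,\tilde g_c)$; conversely a direct computation on $\tilde g_c$ confirms equality. The step I expect to demand the most care is rigor for a merely $C^1$ metric: one must invoke uniformization in the form producing a continuous conformal factor and justify Fubini and Cauchy--Schwarz for $\rho\in L^2$ (unproblematic here but worth stating), and one must verify carefully that the minimum of $\mathrm{Im}\,\tau$ over the modular fundamental domain is exactly $\sqrt3/2$, attained only at the hexagonal lattice, since this is precisely what transfers the optimality from the torus to the sphere in Theorem~\ref{th:dias-sing}.
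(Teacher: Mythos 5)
Your proof is correct. Note first that the paper does not prove this statement at all: it is recalled as Loewner's classical theorem, attributed to an unpublished argument and referenced via \cite{Pu52}, so there is no internal proof to compare against. What you have written is the standard (and complete) argument: uniformize to a flat torus $\C/\Lambda$, reduce $\tau$ to the modular fundamental domain so that $1$ is a shortest lattice vector (the verification $|m+n\tau|^2=m^2+2mn\,\mathrm{Re}\,\tau+n^2|\tau|^2\ge m^2-|mn|+n^2\ge 1$ and the bound $\mathrm{Im}\,\tau\ge\sqrt{3}/2$ are both routine and correctly used), sweep out by the parallel loops $\gamma_s$, average the length lower bound over $s$, and close with Cauchy--Schwarz; the equality analysis (constant conformal factor plus $\tau$ at the hexagonal point) is also handled correctly. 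It is worth observing that this is exactly the device the paper itself deploys in the proof of its main theorem: there the author integrates $l_g(\gamma_s)$ over the one-parameter family of loops pulled back from the hexagonal torus and applies Cauchy--Schwarz to compare with $\area(S^2,g)$, so your argument is not only valid but methodologically of a piece with the rest of the paper. The only point deserving a word of care, which you already flag, is that for a $C^1$ metric the uniformizing conformal factor is continuous (indeed this is classical for metrics of this regularity), after which Fubini and Cauchy--Schwarz apply without difficulty.
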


The study of such isosystolic inequalities is well developped, see \cite[p.104]{Be00} for instance.

\medskip

The systolic geometries of $(S^2,g_c)$  and $(\T^2,\tilde{g}_c)$ are intimately related : to each systole of $(\T^2,\tilde{g}_c)$ (that is a closed geodesic
realizing the homotopy systole) corresponds a geodesic loop of $(S^2,g_c)$ whose length equals the general systole (see
Figure \ref{fig:sys}). More precisely,
\begin{itemize}
\item Either the systole of $(\T^2,\tilde{g}_c)$ passes trough a
point of ramification, and its image by $f$ is a simple geodesic loop with base point a conical singularity,
\item Or the systole of $(\T^2,\tilde{g}_c)$ never goes trough a
point of ramification, and its image is a figure eight geodesic
avoiding conical singularities.
\end{itemize}

\begin{figure}[h]
\leavevmode \SetLabels
\L(.23*.2) $(1,0)$\\
\L(.52*.5) $f$\\
\L(.08*0.65) $(1/2,\sqrt{3}/2)$\\
\endSetLabels
\begin{center}
\AffixLabels{\centerline{\epsfig{file=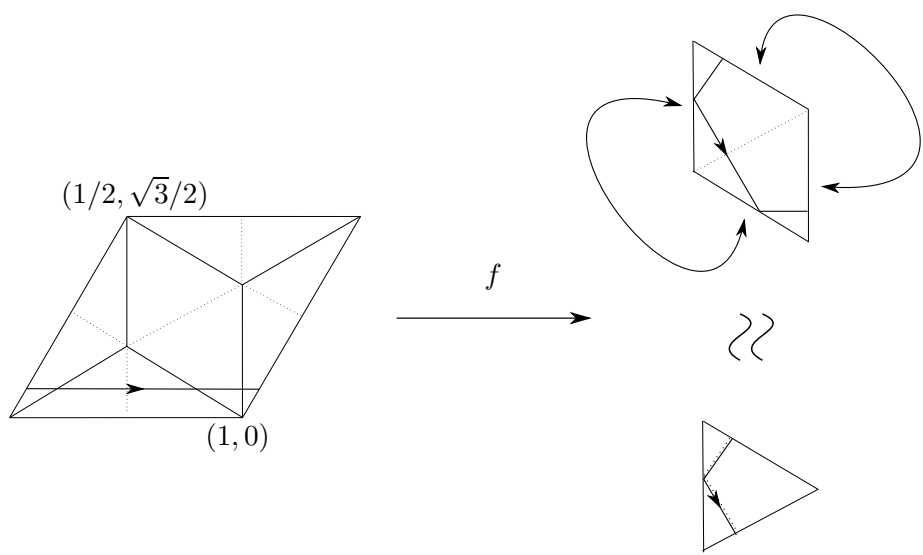,width=12cm,angle=0}}}
\end{center}
\caption{Projection of the systoles} \label{fig:sys}
\end{figure}

The systoles of $(\T^2,\tilde{g}_c)$ can be classified into three
one-parameter families. If we think at $(\T^2,\tilde{g}_c)$ as the
quotient of $\R^2$ by the hexagonal lattice spanned by the vectors
$(1,0)$ and $(1/2,\sqrt{3}/2)$, the horizontal lines, the lines parallel to the vector $(1/2,\sqrt{3}/2)$ and the lines parallel to the vector $(-1/2,\sqrt{3}/2)$
correspond to these three families of systoles. Observe now that
each of these families projects onto the same one-parameter family of geodesic loops of
$(S^2,g_c)$. We denote by $\gamma_s(t)=f(t,s)$ this family where
$(t,s) \in [0,1]\times[0,\sqrt{3}/2]$.

\section{Proof of the local diastolic inequality}

Let $g=e^{2u}\cdot g_c$ be a metric in $\M_{g_c}$ where $u \in
C^1(S^2,\R)$. By application of the
Cauchy-Schwarz's inequality and as $dv_{\tilde{g}_c}=dt \, ds$,
\begin{eqnarray}
\nonumber   \int_0^{\sqrt{3}/2} l_{g}(\gamma_s(t)) ds & = & \int_0^{\sqrt{3}/2} \left( \int_0^1 e^{u\circ f(t,s)} dt \right)ds \\
\nonumber & = &  \int_{\T^2} e^{u \circ f} dv_{\tilde{g}_c} \\
\nonumber & = & 3 \cdot \int_{S^2} e^u dv_{g_c} \\
\nonumber &  \leq & 3  \left( \int_{S^2} e^{2u} dv_{g_c}\right)^{1/2} \left( \int_{S^2} dv_{g_c}\right)^{1/2} \\
\nonumber &  \leq & 3 \left( \area(S^2,g)\right)^{1/2} \left( \area(S^2,g_c)\right)^{1/2}. \\
\nonumber
\end{eqnarray}

So
$$
\frac{\sqrt{3}}{2}\inf_{s \in [0,\sqrt{3}/2]} l_g(\gamma_s) \leq 3 \left( \area(S^2,g)\right)^{1/2} \left( \area(S^2,g_c)\right)^{1/2},
$$
which gives that
$$
\left(\inf_{s \in [0,\sqrt{3}/2]} l_g(\gamma_s)\right)^2 \leq 2\sqrt{3} \cdot \area(S^2,g)
$$
as $\area(S^2,g_c)=\frac{1}{2\sqrt{3}}$.

\begin{lemma}
There exists an open neighborhood $\Op$ of $g_c$ in $\M_{g_c}$ such that for all $g \in \Op$,
$$
\dias(S^2,g) \leq \inf_{s \in [0,\sqrt{3}/2]} l_g(\gamma_s).
$$
\end{lemma}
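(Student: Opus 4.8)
The plan is to bound the minimax from above by constructing, for every $g$ in a suitable $C^1$-neighborhood of $g_c$, an \emph{admissible} family of one-cycles --- a loop in $\mathcal{Z}_1(S^2;\Z)$ inducing a generator of $\pi_1(\mathcal{Z}_1(S^2;\Z),\{0\})$ --- all of whose cycles have mass at most $L:=\inf_{s}l_g(\gamma_s)$. By the definition of the diastole this immediately yields $\dias(S^2,g)\le L$. Since $s\mapsto l_g(\gamma_s)$ is continuous on the compact interval $[0,\sqrt3/2]$, the value $L$ is attained at some $s_0$, and $\gamma_{s_0}$ is one of the geodesic loops already described: either a simple loop based at a conical singularity or a figure eight avoiding the singularities. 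In either case $\gamma_{s_0}$ is null-homotopic and $S^2\setminus\gamma_{s_0}$ is a union of open disks. I will use $\gamma_{s_0}$ as the single widest slice of the sweep-out and fill the complementary disks by contractions whose cycles are no longer than $\gamma_{s_0}$.

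It is worth stressing why simply running along the family $\{\gamma_s\}_s$ does not suffice: its width is $\sup_s l_g(\gamma_s)$ rather than the infimum $L$, and moreover it covers $S^2$ with multiplicity since $f$ has degree $3$, so it does not obviously represent a single generator. Contracting one shortest slice to points is what both brings us to the class of a generator and replaces the long slices by short ones.

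At the model metric $g_c$ the construction is explicit and tight. There $l_{g_c}(\gamma_s)$ is constant, equal to $\sys_0(S^2,g_c)=1$, and $\dias(S^2,g_c)=1$; thus $\gamma_{s_0}$ realizes the diastole and is, along a suitable sweep-out, a mountain-pass configuration. Developing the flat cone metric around the relevant singularity, whose angle $2\pi/3$ is strictly less than $2\pi$, I would contract $\gamma_{s_0}$ to a point inside each complementary disk so that, off the central slice, every intermediate cycle is \emph{strictly} shorter than $\gamma_{s_0}$. Concatenating the two contractions with $\gamma_{s_0}$ in the middle produces a loop of cycles $\{0\}\to\gamma_{s_0}\to\{0\}$ sweeping $S^2$ exactly once; it induces a generator, its width equals $1$, and that width is attained only at the central slice.

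The remaining step, which I expect to be the real difficulty, is the stability of this picture under the $C^1$-small conformal perturbation $g=e^{2u}\cdot g_c$. Here lengths scale by $e^{\pm\|u\|_\infty}$, so away from the central slice the definite length deficit enjoyed at $g_c$ absorbs the perturbation and keeps the corresponding cycles of $g$-length $\le L$. The trouble concentrates \emph{near} the central slice, where that deficit is only of second order: re-centering the construction at the $g$-minimizer $s_0(g)$ makes the central slice have $g$-length exactly $L$, but one must further choose the transverse contraction in a $g$-length non-increasing direction and control it all the way to the two points, the analysis being most delicate near the conical singularities where the developing map must be handled carefully. Carrying this out uniformly is precisely what forces the restriction to a neighborhood $\Op$ of $g_c$; granting it, the constructed sweep-out is admissible of width $\le L$, whence $\dias(S^2,g)\le\inf_s l_g(\gamma_s)$.
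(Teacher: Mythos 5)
Your strategy coincides with the paper's: take the slice $\gamma_{s_0}$ minimizing $l_g$, make it the widest slice of a sweep-out generating $\pi_1(\mathcal{Z}_1(S^2;\Z),\{0\})$, and contract it to points inside the complementary disks (around the conical singularities) so that no other slice is longer. You also correctly diagnose where the difficulty sits: the crude $C^0$ comparison $e^{-\|u\|_\infty}\le l_g/l_{g_c}\le e^{\|u\|_\infty}$ takes care of slices whose $g_c$-length deficit is at least of order $\|u\|_\infty$, but fails in a window around the central slice. The problem is that your proposal stops exactly there: the estimate that closes this window is never produced, only described as ``delicate'' and then granted. That estimate is the entire content of the lemma, so this is a genuine gap rather than a routine verification. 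Moreover, the fallback you suggest --- choosing the transverse contraction ``in a $g$-length non-increasing direction'' --- is itself a nontrivial claim: it amounts to asserting that the $g$-shortest loop $\gamma_{s_0}$ is contractible through loops of no greater $g$-length inside each disk, which is not automatic for a general metric and is precisely what needs proof.

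The paper closes the gap by comparing length \emph{differences} rather than lengths. The contraction is the explicit $g_c$-family of geodesic loops $z_s^\alpha$ with $l_{g_c}(z_s^\alpha)=1-2|\alpha-1/2|$ (in the figure-eight case, first the two lobes $\gamma_{1,s},\gamma_{2,s}$, then the loop $\gamma_{1,s}\cdot\gamma_{2,s}^{-1}$). Lifting the region swept between $\gamma_s$ and $z_s^\alpha$ to the torus, where its area is ${1\over 4}\tan{\pi\over 6}\,(l_{g_c}(\gamma_{i,s})^2-l_{g_c}(z_{i,s}^\alpha)^2)$, and applying Stokes' theorem to $e^{u\circ f}\,dt$ yields
$$
l_g(\gamma_{i,s})-l_g(z_{i,s}^\alpha)\;\ge\;\Bigl(1-\|e^u-1\|_\infty-\tfrac{1}{2}\tan\tfrac{\pi}{6}\,B\,\|de^u\|_0\Bigr)\,\bigl(l_{g_c}(\gamma_{i,s})-l_{g_c}(z_{i,s}^\alpha)\bigr),
$$
where $B$ controls $|\partial_s e^{u\circ f}|$ in terms of $\|de^u\|_0$. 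For $g$ in a $C^1$-neighborhood the prefactor is nonnegative, hence $l_g(z_s^\alpha)\le l_g(\gamma_s)$ for \emph{all} $\alpha$ and $s$, in particular at the $g$-minimizing $s$, which gives the lemma. Note that the derivative term comes from the area integral of $\partial_s e^{u\circ f}$, so $C^1$-smallness of $u$ (not just $C^0$) is used in an essential way; this is the ingredient your proposal would need to supply.
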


\begin{proof}[Proof of the Lemma]
To each geodesic loop $\gamma_s$ of $(S^2,g_c)$ we
associate a one-parameter family of one-cycles $\{z_s^\alpha\}$
where $\alpha \in [0,1]$ such that
\begin{itemize}
\item $z_s^{1/2}=\gamma_s$,
\item $\{z_s^\alpha\}$ starts and ends at one-cycles made of one or two points,
\item $\{z_s^\alpha\}$ induces a generator of $\pi_1{(\mathcal
Z}_1(S^2;\Z),\{0\})\simeq \Z$,
\item each $z_s^\alpha$ is made of one or two closed curves and have
length
$$
l_{g_c}(z_s^\alpha)=1-2\cdot|\alpha-1/2|
$$
where $|\cdot|$ denote the absolute value.
\end{itemize}

For this we consider two cases.\\

\noindent{\bf First case}. If $s=0,\frac{1}{2\sqrt{3}}$ or
$\frac{1}{\sqrt{3}}$, then the geodesic loop $\gamma_s$ goes through a
single singularity. Thus $\gamma_s$ bounds two disks $D_1$ and $D_2$
each one containing in its interior a conical singularity. For
$i=1,2$ we homotope $\gamma_s$ in $D_i$ to the singularity lying in
its interior through a $C^0$-family $\{z_{i,s}^\beta\}_{\beta \in
[0,1]}$ of geodesic loops (whose base point lies on the edge of the
triangle) of length
$$
l_{g_c}(z_{i,s}^\beta)=1-\beta.
$$
Then we set
$$
z_s^\alpha = \left\{\begin{array}{c}
z_{1,s}^{1-2\alpha} \text{ if } \alpha \in [0,\frac{1}{2}]\\
z_{2,s}^{2\alpha-1} \text{ if } \alpha \in [\frac{1}{2},1]\\
\end{array}\right. ,
$$
see Figure \ref{fig:family}, First case.
\\

\noindent{\bf Second case}. If $s=\frac{k}{2\sqrt{3}}+s'$ where
$k=0,1,2$ and $s' \in ]0,\frac{1}{2\sqrt{3}}[$, then 
$\gamma_s$ consists of a figure eight geodesic which avoids
singularities. So $\gamma_s$ decomposes into the concatenation of
two simple closed geodesic loops $\gamma_{1,s}$ and $\gamma_{2,s}$.
Let $D_i$ be the disk bounding by $\gamma_{i,s}$ and containing a
single singularity for $i=1,2$. Each $\gamma_{i,s}$ can be homotoped
in $D_i$ to the singularity lying in its interior through a family
$\{z_{i,s}^\beta\}_{\beta \in [0,1]}$ of geodesic loops (whose base point lies on the edge of the
triangle) of length
$$
l_{g_c}(z_{i,s}^\beta)=(1-\beta)l_{g_c}(\gamma_{i,s}).
$$
In a similar way, the piecewise geodesic $\tilde{\gamma}_s$ obtained by the concatenation of $\gamma_{1,s}$ and $\gamma_{2,s}^{-1}$ bounds an open disk containing a single singularity in its interior, and we denote by $D$ its closure. Again $\tilde{\gamma}_s$ can be homotoped in $D$  to the singularity lying in the interior of $D$ through a family $\{z_s^\beta\}_{\beta \in [0,1]}$ of piecewise geodesics of length
$$
l_{g_c}(z_s^\beta)=(1-\beta).
$$
Each piecewise geodesic $z_s^\beta$ consists of two geodesic arcs $z_{1,s}^\beta$ and $z_{2,s}^\beta$.

Then we set
$$
z_s^\alpha = \left\{\begin{array}{c}
z_{1,s}^{1-2\alpha}+z_{2,s}^{1-2\alpha} \text{ if } \alpha \in [0,\frac{1}{2}]\\
z_s^{2\alpha-1} \text{ if } \alpha \in [\frac{1}{2},1]\\
\end{array}\right. ,
$$
see  Figure \ref{fig:family},
Second case.\\

\begin{figure}[h]
\leavevmode \SetLabels
\L(.0*1) First case\\
\L(.49*1) Second case\\
\L(.25*.6) $\gamma_s$\\
\L(.72*.6) $\gamma_s$\\
\endSetLabels
\begin{center}
\AffixLabels{\centerline{\epsfig{file=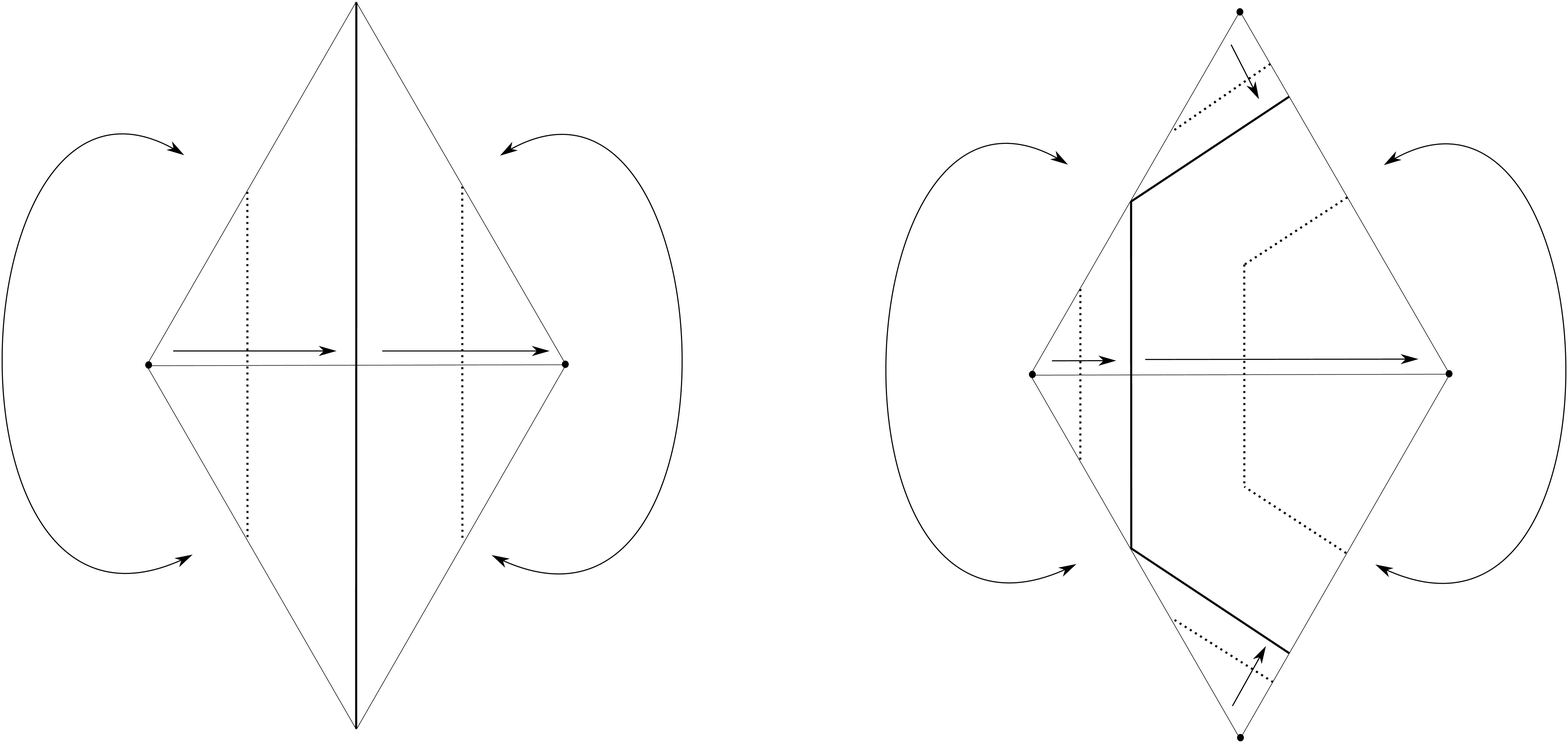,width=12cm,angle=0}}}
\end{center}
\caption{Families of one-cycles.} \label{fig:family}
\end{figure}

Denote by $\|\cdot\|_\infty$ the uniform norm on $C^1(S^2,\R)$. For each $\alpha \in [0,1/2[\cup]1/2,1]$, the family of $1$-cycles 
$$
\{z_s^{t/2+(1-t)\alpha}\mid t \in [0,1] \}
$$
 covers a domain  $ \Omega_s^\alpha$ whose boundary is the reunion of $\gamma_s$ and $z^\alpha_s$.  We can lift this domain to a domain $\tilde{\Omega}_s^\alpha$ of $\T^2$ as in Figure \ref{fig:domains}  which decomposes into two domains $\tilde{\Omega}_{1,s}^\alpha$ and $\tilde{\Omega}_{2,s}^\alpha$. One of this domain is reduced to a point if $s=0,1/(2\sqrt{3})$ or
$1/\sqrt{3}$. Each domain $\tilde{\Omega}_{i,s}^\alpha$ is bounded by three curves:  $\tilde{\gamma}_{i,s}$, $\tilde{z}^\alpha_{i,s}$ and $\tilde{c}_{i,s}$. Note that $\tilde{c}_{i,s}$ is composed of two connected components. The curve $\tilde{c}_{i,s}$ is oriented such that the concatenation of the first connected component of $\tilde{c}_{i,s}$, $\tilde{z}^\alpha_{i,s}$ and the second component of $\tilde{c}_{i,s}$ makes sense and is an arc homotopic to $\tilde{\gamma}_{i,s}$. We endow $\tilde{\Omega}_{i,s}^\alpha$ with the orientation such that its boundary is the concatenation of $\tilde{\gamma}_{i,s}$, the first connected component of $-\tilde{c}_{i,s}$, $-\tilde{z}^\alpha_{i,s}$ and the second component of $-\tilde{c}_{i,s}$. We can easily compute that
$$
\area(\tilde{\Omega}_{i,s}^\alpha,\tilde{g}_c)={1\over 4} \, \tan {\pi \over 6} \, (l_{g_c} (\gamma_{i,s})^2 -l_{g_c}(z^\alpha_{i,s})^2).
$$
Observe that  
$$
\area(\tilde{\Omega}_{i,s}^\alpha,\tilde{g}_c)\leq{1\over 2} \, \tan {\pi \over 6} \, (l_{g_c} (\gamma_{i,s}) -l_{g_c}(z^\alpha_{i,s}))
$$
as $l_{g_c} (\gamma_{i,s}) +l_{g_c}(z^\alpha_{i,s})\leq 2$.

\begin{figure}[h]
\leavevmode \SetLabels
\L(.0*1) Case $\alpha <\frac{1}{2}$ \\
\L(.52*1) Case $\alpha > \frac{1}{2}$\\
\L(.23*.68) $\tilde{\gamma}_s$\\
\L(.75*.46) $\tilde{\gamma}_s$\\
\L(.08*.3) $\tilde{z}^\alpha_{1,s}$\\
\L(.24*.86) $\tilde{z}^\alpha_{2,s}$\\
\L(.58*.8) $\tilde{z}^\alpha_{1,s}$\\
\L(.9*.15) $\tilde{z}^\alpha_{2,s}$\\
\L(.01*.62)  $\tilde{\Omega}_{1,s}^\alpha$\\
\L(.31*.84)  $\tilde{\Omega}_{2,s}^\alpha$\\
\L(.53*.62)  $\tilde{\Omega}_{1,s}^\alpha$\\
\L(.83*.84)  $\tilde{\Omega}_{2,s}^\alpha$\\
\endSetLabels
\begin{center}
\AffixLabels{\centerline{\epsfig{file =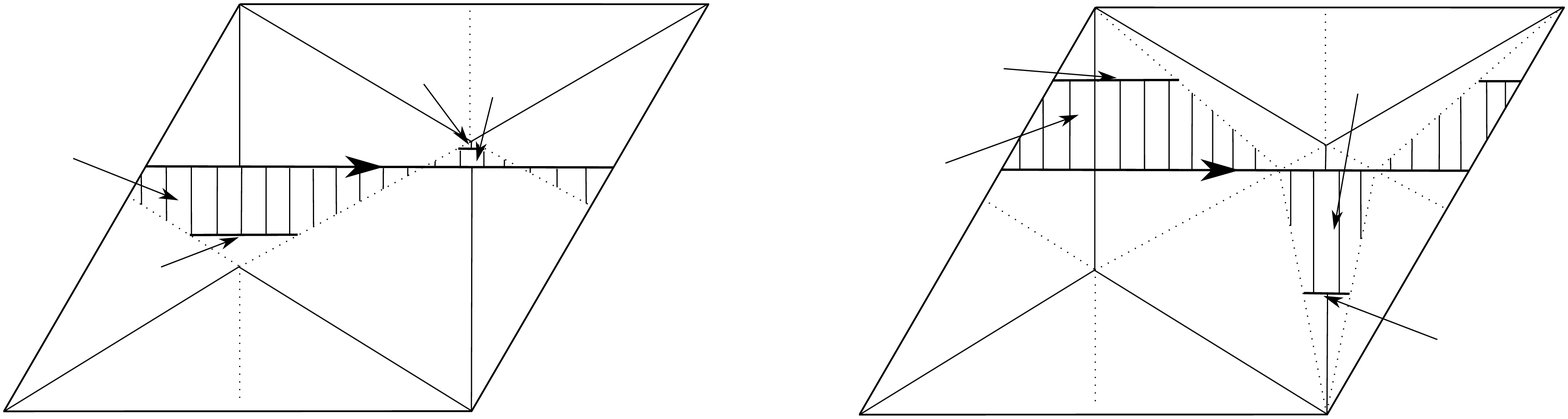,width=12cm,angle=0}}}
\end{center}
\caption{Domains.}
\label{fig:domains}
\end{figure}

For all $g=e^{2u} \cdot g_c
\in \M_{g_c}$ and by Stokes' Theorem,

\begin{eqnarray}
\nonumber l_g(\gamma_{i,s})-l_g(z^\alpha_{i,s})&=&  \int_{\tilde{\gamma}_{i,s}}e^{u\circ f} \, dt -\int_{\tilde{z}^\alpha_{i,s}}e^{u\circ f} \, dt\\
\nonumber &=&  \int_{\tilde{c}_{i,s}}e^{u\circ f} \, dt+\int_{\tilde{\Omega}^\alpha_{i,s}} d e^{u\circ f} \wedge dt\\
\nonumber &=& \int_{\tilde{c}_{i,s}}  \, dt+ \int_{\tilde{c}_{i,s}} (e^{u\circ f}-1) \, dt+\int_{\tilde{\Omega}^\alpha_{i,s}} {\partial \over \partial s} e^{u\circ f} \, ds \wedge dt\\
\nonumber &\geq& \int_{\tilde{c}_{i,s}}  \, dt- \int_{\tilde{c}_{i,s}} \|e^u-1\|_\infty \, dt-\sup_{\tilde{\Omega}^\alpha_{i,s}} \left| {\partial \over \partial s}  e^{u\circ f}\right| \cdot \area(\tilde{\Omega}_{i,s}^\alpha,\tilde{g}_c) \\
\nonumber &\geq&(1-\|e^u-1\|_\infty-{1 \over 2} \cdot \tan {\pi \over 6} \cdot  \sup_{\tilde{\Omega}^\alpha_{i,s}} \left| {\partial \over \partial s}  e^{u\circ f}\right|) \, (l_{g_c}(\gamma_{i,s})-l_{g_c}(z^\alpha_{i,s}))\\
\nonumber
\end{eqnarray}

Observe that there exists a positive constant $B$ such that
$$
\sup_{\tilde{\Omega}^\alpha_{i,s}} \left| {\partial \over \partial s}  e^{u\circ f}\right| \leq B \cdot \|de^u\|_0
$$
where $\|de^u\|_0=\sup \{|d {e^u}_x(v)| \mid (x,v) \in T S^2 \text{ with } g_0(v,v)=1\} $ is the uniform norm on closed one-forms on $S^2$ associated to the round metric  $g_0$.
 In fact,
$$
\left| {\partial \over \partial s}  e^{u\circ f} \right| (x)  =  \left|{d e^u}_{f(x)} \circ df_x ({\partial \over \partial s} )\right|\\
$$
for all $x \in \T^2$.
But $g_0(df_x ({\partial \over \partial s}),df_x ({\partial \over \partial s}))$ goes to $0$ when $x$ goes to a point of ramification. This is clear from the fact that $g_c(df_x ({\partial \over \partial s}),df_x ({\partial \over \partial s}))=1$ outside singularities and from the following expression of $g_0$: 
$$
g_0= \left(\frac{2}{1+|z|^2}\right)^2 \cdot \left(|z+1|\cdot |z| \cdot |z-1|\right)^{4/3} \cdot g_c
$$ 
where $z \in S^2=\C\cup\{\infty\}$.
 So we can set $B=\sup_{x \in \T^2}\sqrt{g_0(df_x ({\partial \over \partial s}),df_x ({\partial \over \partial s}))}$.

\medskip

Now define the set $\Op$ as the reunion of the open sets
$$
\{e^{2u}\cdot g_c \in \M_{g_c} \text{ such that } \|e^u-1\|_\infty< t \, \text{ and } \,  {1 \over 2} \tan {\pi \over 6} \,  B \, \|de^u\|_0<1-t\}
$$
for $t \in ]0,1[$. The set $\Op$ is an open neighborhood of $g_c$ in the $C^1$-topology and we get 
$$
l_g(\gamma_s)\geq l_g(z^\alpha_s)
$$
for all $g \in
\Op$, and for all $(s,\alpha)$. 

Fix $s' \in [0,\frac{\sqrt{3}}{2}]$ such that $l_g(\gamma_{s'})=\inf_{s \in [0,\sqrt{3}/2]} l_g(\gamma_s)$.
For all $g \in \Op$, we obtain 
\begin{eqnarray}
\nonumber \dias(S^2,g)& \leq &\sup_{\alpha \in [0,1]} l_g(z_{s'}^\alpha)\\
\nonumber &\leq &  l_g(\gamma_{s'})\\
\nonumber &\leq &\inf_{s \in [0,\sqrt{3}/2]} l_g(\gamma_s).\\
\nonumber
\end{eqnarray}

\end{proof}

Let us finish the proof of the Theorem \ref{th:dias-sing}. For all
$g \in \Op$, we have
$$
(\dias(S^2,g))^2 \leq  2\sqrt{3} \cdot \area(S^2,g).
$$
Now observe that the equality case imposes to the conformal factor $e^{2u}$ to be constant. So $g$ is homothetic to $g_c$.


\begin{thebibliography}{99}



\bibitem[Al60]{almgren}
Almgren, F.: {\em The homotopy groups of the integral cycle groups}. Topology {\bf 1} (1960) 257--299.


\bibitem[AZ67]{AZ67}
Aleksandrov, A. \& Zalgaller, V.: {\em Intrinsic geometry of surfaces}. Translations of Mathematical Monographs {\bf 15}, AMS (1967).


\bibitem[Bab97]{Bab97}
Babenko, I.: {\it Topological entropy of geodesic flows on simply connected manifolds, and related problems}. (Russian)  Izv. Ross. Akad. Nauk Ser. Mat.  {\bf 61}
 (1997), 57-74;  translation in  Izv. Math.  {\bf 61}  (1997), 517--535.

\bibitem[Bal06]{Ba06}
Balacheff, F.: {\em Sur la systole de la sph\`ere au voisinage de la m\'etrique standard}. Geometriae Dedicata {\bf 121} (2006), 61-71.


\bibitem[BS07]{BS07}
Balacheff, F. \& Sabourau, S.: {\em Diastolic inequalities and
isoperimetric inequalities on surfaces}.
 {Preprint} (2007), available on http://math.univ-lille1.fr/$\sim$balachef/ .
 
 
\bibitem[Be00]{Be00}
Berger, M.: {\em Riemannian Geometry during the Second Half of the Twentieth Century}. University lecture series {\bf 17}, AMS (2000).

 
\bibitem[CC92]{CC92}
Calabi, E.; Cao J.: {\em Simple closed geodesics on convex surfaces.}
J. Differential Geom. \textbf{36} (1992), 517--549.


\bibitem[Cr88]{Cr88} Croke, C.: {\em Area and the length of the shortest closed geodesic.}
J. Differential Geom. \textbf{27} (1988), no.~1, 1-21.



\bibitem[Hi76]{Hi76}
Hirsch, M.: {\it Differential Topology}. Graduate Texts in Math. {\bf 33}, Springer
Verlag New-York (1976).


\bibitem[NR02]{NR02}
Nabutovsky, A. \& Rotman, R.: {\em The length of the shortest closed geodesic
  on a $2$-dimensional sphere}. Int. Math. Res. Not. {\bf 23}, 2002, 1211-1222.
  
  
\bibitem[Pi74]{pit74}
Pitts, J.: {\em Regularity and singularity of one dimensional stationary integral varifolds on manifolds arising from variational methods in the large.} Symposia Mathematica, Vol. XIV (1974) 465--472.


\bibitem[Pu52]{Pu52}
Pu, P.: {\em Some inequalities in certain nonorientable Riemannian
  manifolds}. {Pacific. J. Math.} {\bf  2} (1952), 55--72.


\bibitem[Re01]{Re01}
Reshetnyak, Yu. G.: {\em On the conformal representation of Alexandrov surfaces}. Report. Univ. Jyv\"askyl\"a {\bf 83} (2001), 287-304.


\bibitem[Ro05]{Ro05}
Rotman, R.: {\em The length of a shortest closed geodesic and the area of a $2$-dimensional sphere}. Proc. Amer. Math. Soc. {\bf 134} (2006), 3041-3047.




\bibitem[Sa04]{Sa04}
Sabourau, S.: {\em Filling radius and short closed geodesic of the sphere}. Bull. SMF {\bf 132}, 2004, 105-136.


\bibitem[Tro86]{Tro86}
Troyanov, M.: {\em Les surfaces euclidiennes \`a singularit\'es
coniques}. {L'Enseignement Math\'ematique} {\bf 32} (1986) 79--94.


\end{thebibliography}
\end{document}